\newcommand{\COLORON}{0}
\newcommand{\NOTESON}{0}
\newcommand{\Debug}{0}
\renewcommand{\labelenumi}{\theenumi}
\newcommand{\comment}[1]{}
\newcommand{\COMMENT}[1]{}
\definecolor{darkgray}{rgb}{0.3,0.3,0.3}
\newtheorem{proposition}{Proposition}[section]
\newtheorem{theorem}[proposition]{Theorem}
\newtheorem{lemma}[proposition]{Lemma}
\newtheorem{examp}[proposition]{Example}
\newcommand{\FIG}{0}
\newcommand{\note}[1]{ 

	\ 

	{\color{blue} \hspace*{-60pt} NOTE: \color{Turquoise}{\small  \tt \begin{minipage}[c]{1.1\textwidth}  #1 \end{minipage} \ignorespacesafterend }} 
	
	\ 
	
	}
\else \newcommand{\note}[1]{} \fi
\newcommand{\afsubm}[1]{ \ifnum \Debug = 1 {\mymargin{#1}}
\fi} 
\renewcommand{\color}[1]{}
\newcommand{\Ex}{\mathbb E}
\newcommand{\Lr}[1]{Lemma~\ref{#1}}
\renewcommand{\iff}{if and only if}
\newcommand{\obda}{without loss of generality}
\newcommand{\mymargin}[1]{
  \marginpar{%
    \begin{minipage}{\marginparwidth}\small%
      \begin{flushleft}%
        {\color{blue}#1}%
      \end{flushleft}%
   \end{minipage}%
  }%
}%
\newcommand{\mySection}[2]{}
\newlength{\originalbase}
\newcommand{\Res}{{\cal R}}
\begin{document}

\newcommand{\comxy}{\ensuremath{ET^{x \leftrightarrow y}}}
\newcommand{\effrxy}{\ensuremath{\Res^{x y}}}
\newcommand{\cmfwxy}{\ensuremath{T^{x y}_{A\rightarrow}}}
\newcommand{\cmfwyx}{\ensuremath{T^{x y}_{A\leftarrow}}}
\newcommand{\cminxy}{\ensuremath{T^{x  y}_{A}}}
\newcommand{\cmaxxy}{\ensuremath{T^{x y}_{A\leftrightarrow}}}
\newcommand{\Ecmfwxy}{\ensuremath{\Ex \cmfwxy}}
\newcommand{\Ecmfwyx}{\ensuremath{\Ex \cmfwyx}}
\newcommand{\Ecminxy}{\ensuremath{\Ex \cminxy}}
\newcommand{\Ecmaxxy}{\ensuremath{\Ex \cmaxxy}}
\newcommand{\andcom}[1]{\ensuremath{\overleftrightarrow{#1}}-commute}

\title{Two-Color Babylon}

\author{Agelos Georgakopoulos\thanks{The first author acknowledges support by the University of Ottawa; georgakopoulos@tugraz.at}
\, and Peter Winkler\thanks{Research
supported by NSF grant DMS-0901475. Department of Mathematics, Dartmouth,
Hanover NH 03755-3551, USA; peter.winkler@dartmouth.edu. }}

\maketitle

\begin{abstract}
We solve the game of Babylon when played with chips of two colors,
giving a winning strategy for the second player in all previously
unsolved cases.
\end{abstract}

\section{Introduction}

The game of Babylon begins with $n$ chips of various colors, spread out
as ``stacks'' of height 1.  Players Alice and Bob alternate combining
two stacks which have the same number of chips or the same color
on top (or both).  The last player to make a legal move wins.

The game was introduced in 2003 by French designer Bruno Faidutti \cite{F} and is available
in commercial form, with 12 chips in four colors, three of each color.
(Of course, the game can be played with ordinary poker chips.)  Computer-aided
analysis \cite{B,G,M} has shown that with best play the aforementioned
initial configuration of the commercial game is a win for the second player.

The simplicity and attractiveness of Babylon has led researchers to try
to solve the game in general, and in particular the cases where the
tokens are of only two colors.  Goadrich and Schlatter \cite{GS} show that
the first player (Alice) can always win if there are only one or two chips of the
minority color, or if $n$ is odd; they conjectured that Bob can win in the
remaining, most difficult cases, when $n$ is even and there are three or
more chips of the minority color.  We prove their conjecture, completing
the winning-player classification for two-color Babylon.  Our strategy for
Bob is explicit and easy to implement in practice.

\section{General Observations}

Alice wins any game of Babylon if the total number of moves is odd;
Bob, if even.  Since the number of stacks decreases by one at each player's
turn, the number of moves is at most $n{-}1$.  In two-color Babylon, the
only other possibility is that there are $n{-}2$ moves, ending with two
stacks of different height and color (the color of a stack is naturally defined to be the color of its top chip; the colors of its other chips cannot affect the game any more).

The one-color game is of course trivial, with Alice winning when
$n$ is even and Bob when $n$ is odd.  If $n$ is even and there are only
one or two chips of the minority color (which, continuing the convention
of \cite{GS}, is red), Alice wins easily by obliterating the red chips;
if there's only one she covers it with a blue chip, and if there are two
she stacks them and at her next move covers them with the blue 2-stack
that Bob was forced to make at his first move.

If $n$ is odd and there are one or two red chips, Alice needs to keep them
alive; this is easily done by stacking red on blue (if there is one red chip)
or stacking the two red chips, then stacking the result on Bob's blue 2-stack.
Thereafter if Bob ever threatens the red stack by building a blue one of the
same height, Alice piles her red stack on top of the challenging blue one.
Thus the size of the red stack is always a power of 2, and at least 4.
There is never any danger that Alice will herself be forced to make a blue
stack of the same height as the red one, because this can only happen if all
blue stacks are exactly half the height of the red one; but that is impossible
because $n$ is odd.

Thus, Alice wins all two-color games that begin with one or two red chips.
When $n$ is odd and there are three or more red chips, Alice again needs
to preserve both colors; and again, she can easily do so, as shown in \cite{GS}.
Thus the critical remaining case is when there are three or more red chips
and $n$ is even.  This time it is Bob who needs to keep both colors alive.
The difficulty of this case, relative to the odd-$n$ case, is twofold: (1) Bob
lacks the power of first move, and (2) Bob must take care not to create, or permit
Alice to create, a stack of height $n/2$ that is the only stack of its color.

We will show that Bob can overcome these difficulties and win in all cases,
completing the classification for two-color Babylon.

\section{Notation}

We use the symbol $b$ to denote a blue chip, $r$ a red chip; $x$ is a chip of either
color and $y$ a chip of the {\em other} color; $z$ stands for a chip of any
color.  We will be using strings of the above symbols to describe stacks, with the left-most symbol standing for the chip
on top.  A {\em hill} is a stack of height at least two.  When there is no possibility
of confusion we often denote a hill by the capitalized form of the notation for
its color, thus we could write $R$ instead of $rbb$ and $X$ instead of $xxyz$.

In this notation a {\em move} by either player consists of appending one string (stack)
to another, always with the restriction that the two stacks were of the same color
or the same height.

Since the other cases are solved, we will be concerned exclusively with games in
which there are initially an even number $2m$ of chips, each its own stack, and
at least three of the minority color (red).  The parity of the number of stacks
at any state of the game tells us who is next to move; Alice moves next from any
even state, Bob from any odd state.

A state will be termed {\em safe} if it leads to a win by Bob, with best
play.  Equivalently, we may say that an even state in which Alice cannot move
(i.e., a two-stack state consisting of red and blue hills of different heights)
is safe, that an even state is safe if any move of Alice produces a safe state,
and an odd state is safe if there is some move by Bob leading to another safe state.
Our objective is to show that the initial state is safe.

\section{Main Results}

\begin{theorem} \label{thm:main}
Two-color Babylon with $p$ chips of one color and $q$ of the other, $1 \le p \le q$,
is a win for the second player if and only if $p{+}q$ is even and $p \ge 3$.
\end{theorem}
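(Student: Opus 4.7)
The plan is as follows. By the analysis in Section~2, all cases are already settled except when $n := p+q$ is even and $p \ge 3$ (red being the minority color); for this case I must show that the initial state of $n$ singletons is safe in the sense of Section~2. I would do this by exhibiting an explicit strategy for Bob and verifying, by induction on the number of remaining chips, that every even state reached after Bob's response satisfies a suitable invariant.

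The first step is to identify a clean invariant $I$ preserved after each Bob move. A plausible candidate is a multi-part condition on the current multiset of stacks: (a) a parity requirement on the numbers of red and blue stacks, chosen so that the game must end with two stacks rather than one; (b) the presence of at least one ``reserve'' red hill and one reserve blue hill that cannot be subsumed by any single Alice move (for instance, because of a height discrepancy with every opposite-color hill); and (c) a forbidden-height condition ensuring that no stack of height exactly $n/2$ has unique color, which Section~2 identified as the critical danger for Bob. The invariant should be strong enough that any even state satisfying it is safe, yet weak enough that Bob can restore it after arbitrary Alice moves.

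The second step is the core inductive argument. Given an odd state $s$ at which $I$ holds (after a previous Bob move), I consider all legal Alice moves producing an even state $s'$, and for each show that Bob has a response producing an odd state $s''$ at which $I$ again holds. I would case-split Alice's moves by type: (1) same-color merge of two hills, (2) same-height merge of opposite-color hills, and (3) merges involving singleton chips. Within each type, I would further split on whether the move touches the reserve stacks or threatens to create a height-$n/2$ monopoly. In each subcase, Bob's response is prescribed explicitly — often a ``mirror'' merge on the opposite color, or a move that immediately neutralizes the new threat by stacking the offending hill.

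The main obstacle will be the endgame, where only a handful of stacks remain and both the parity and height conditions in $I$ are tight, so that Bob cannot freely choose among several responses. I would handle this by isolating a short catalogue of ``small'' safe configurations (with at most, say, five stacks) which are verified by direct analysis, and then designing Bob's general strategy so that every play funnels into one of these catalogued positions. The base case — that the initial all-singleton configuration with $3 \le p \le q$ and $p+q$ even satisfies $I$ — should follow once the invariant is formulated, leaving the verification of the inductive step and the endgame catalogue as the technical heart of the proof.
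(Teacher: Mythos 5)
This is a genuine gap: what you have written is a plan for a proof, not a proof. Every load-bearing element is deferred — the invariant $I$ is described only by the properties it ``should'' have (``strong enough that any even state satisfying it is safe, yet weak enough that Bob can restore it''), Bob's responses are ``prescribed explicitly'' but never actually prescribed, and no case of the induction is verified. For a theorem of this type the content \emph{is} the explicit invariant, the explicit responses, and the case-checking; you yourself concede that ``the verification of the inductive step and the endgame catalogue'' is ``the technical heart of the proof,'' and that heart is absent. The paper's proof, by contrast, names the invariant concretely (a \emph{target state}: exactly one hill of each color, each of even height, with at least four singletons of each color), gives Bob's three responses ($xz \mapsto xzX$, $xX \mapsto xX$, $XY \mapsto yy$), and then spends most of its length on the endgame, where two lemmas with roughly fifteen subcases handle the positions in which singletons run out or a hill approaches height $m$.

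Beyond the incompleteness, the one piece of the invariant you do tentatively commit to looks wrong. Part (b) asks for reserve hills of each color ``that cannot be subsumed by any single Alice move, for instance because of a height discrepancy with every opposite-color hill.'' In the actual winning line Bob cannot arrange this: after the opening exchange the two hills both have height $2$, so Alice can always merge them ($XY$); Bob's answer is not to prevent that move but to absorb it, replying $yy$ to resurrect a hill of the swallowed color. An invariant demanding un-mergeable reserve hills would fail on Bob's very first turn when $p=q$ is small, and maintaining equal-height hills while forbidding the merge is impossible. So even as an outline, the proposal points at an invariant that the game does not admit; the correct invariant must tolerate the hill merge and instead control parity of hill heights and the supply of singletons, which is exactly what the paper's target states do.
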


\begin{proof}
As noted in the introduction, owing to the results in \cite{GS} we need only provide
a winning strategy for Bob when $p{+}q=2m$ and $p \ge 3$.  We will show that Bob
can win by creating even states in which there is at most one hill of each color,
until an ``end game'' is reached in which he has to be a bit more careful.

A {\em singleton} is a stack that is not a hill, that is, one consisting of just one chip. 
A state with $j$ red singletons, $k$ blue singletons, 
red hills of heights $u_1,u_2,\dots$ and blue hills of heights $v_1,v_2,\dots$
will be denoted by $\langle j,k;u_1,u_2,\dots;v_1,v_2,\dots \rangle$.  The initial
configuration is thus $\langle p,q;; \rangle$.

Even states with exactly one hill of each color, each of even height, and at least four
singletons (stacks of height one) of each color, will be called {\em target} states.
Thus, a target state with $j$ red singletons, $k$ blue singletons, a red hill of height
$2u$ and a blue hill of height $2v$ will be denoted by $\langle j,k;2u;2v \rangle$.

Alice's options at a target state, and our notations for them, are:
\begin{enumerate}
\item place a singleton on top of another singleton: $xz$ 
\item place a singleton on top of the hill of its color (or vice-versa): $xX$
\item place one hill on top of the other: $XY$.
\end{enumerate}

Of course, option (2) requires that a hill exist, and (3) that there are two
hills and that they are of the same height.

Bob normally answers $xz$ with $xzX$, $xX$ with $xX$ and $XY$ with $yy$, always creating a new
even state in which there is again exactly one hill of each color, each of even height. We call this kind of play by Bob the {\em even-hill-strategy}.
In particular, $\langle j,k;u;v \rangle$ transforms to either $\langle j{-}2,k;2u{+}2;2v \rangle$,
$\langle j,k{-}2;2u;2v{+}2 \rangle$, $\langle j{-}1,k{-}1;2u{+}2;2v \rangle$, $\langle j{-}1,k{-}1;2u;2v{+}2 \rangle$,
$\langle j{-}2,k;2;2v{+}2u \rangle$, or $\langle j,k{-}2;2u{+}2v;2 \rangle$.  Since the number of singletons
of one or both colors may have been reduced below four, the new state might not be a target state.

Observe that if $p \ge 6$ then Bob can create a target state at his first move: if Alice
opens with $xx$ he replies with $yy$, and if she opens with $xy$ he replies with $yx$.
Either way the result is target state $\langle p{-}2,q{-}2;2,2 \rangle$.


Before proceeding further it will be useful to understand what happens when there
is only one stack left of one of the colors.

\begin{lemma} \label{lemma:1}
Any even state with just one stack $X$ of color $x$ and at least one of color $y$ is safe,
provided the height of $X$ exceeds $m$.
\end{lemma}

\begin{proof} Nothing can ever be combined with $X$, so both colors survive and Bob wins.
\end{proof}

\begin{lemma} \label{lemma:2}
Any odd state with just one stack $X$ of color $x$ is safe,
provided the height $u$ of $X$ is less than $m$, not all $y$-stacks are of height $u/2$,
and at most two are of height $u$.
\end{lemma}

\begin{proof}
We proceed by induction on the (even) number $t$ of $y$-stacks.  If there are two $y$-stacks of height $u$,
Bob stacks them; if one, he stacks it with any other $y$-stack.  All $y$-stacks of height $u$ are thus
killed.  In all other cases, Bob makes the largest $y$-stack possible of height $u' \not= u$.
(Since the $y$-stacks are not all of height $u/2$, either the tallest two or the shortest two
will combine to make a stack of height $\not=u$; thus $u'$ is well-defined.)  Now Alice cannot change
the height of $X$, nor can she suddenly make three $y$-stacks of height $u$.  Can she cause
all the $y$-stacks to become of height $u/2$?  No: if there had been a $y$-stack of height $u/2$
before we would now have $u' > u/2$.  So we are reduced to the cases where Alice has created a first
or second $y$-stack of height $u/2$.  Since Alice leaves an even number of $y$ stacks, and
$u + u/2 + u/2 < 2u < 2m = n$, there must be a stack of height different than $u/2$ after her move.
\end{proof}

Let us return to the proof of the main theorem.
Bob's strategy is to continue applying the even-hill-strategy as long as it leaves a target state for Alice, i.e., it leaves at least four singletons in each of the colors. Now consider the first position $S$ faced by Bob such that playing from $S$ according to the even-hill-strategy would leave less than four singletons in one of the colors. If following the even-hill-strategy would yield a position with 3 singletons in one color and at least 3 in the other color (which happens if $S=\langle 3,k;2u,2;2v \rangle$ or $S=\langle 3,k;2u;2v,2 \rangle$  with $k \ge 3$, Alice having just created a stack of height 2, or if $S=\langle 5,k;0;2v\rangle$, Alice having just played $BR$), then Bob wins by the following lemma.

\begin{lemma} \label{lemma:4}
Any even state of the form 
$\langle 3,k;2u;2v \rangle$ with $k\geq 3$ and $u+v\geq 3$ is safe.
\end{lemma}
\begin{proof}

\renewcommand{\labelenumi}{(\roman{enumi})}
We analyze all possible replies by Alice. In each case Bob must be careful not to allow Alice to create a solo
stack of height $m$.  His moves must thus take into account the value of $2u$
(of concern when at or just below $m$) and the relative values of $2u$ and $2v$
(of concern when they are both around $m/2$).

\begin{enumerate}
\item Alice replies $rb$;

By \Lr{lemma:3} Bob wins by $rbR$ unless $2u+4=m$ and $k>3$. If the latter is the case, then Alice threatens to create a single red stack containing half the chips. But Bob can reply with $br$, and it is now easy for him to make sure that both colors survive; see Lemmas \ref{lemma:1} and \ref{lemma:2}.

\item Alice replies $br$;

Arguing as in the previous case, we can play $brB$ unless $2u+2=m$ and $k>3$. Note that in the latter case $u>1$ must hold for $u+v\geq 3$. Bob can then play $br$, threatening $rR$ in his next move which would finish the game by \Lr{lemma:2}. Alice can try to prevent this by $rb$, but then $brrb$ wins for Bob  by \Lr{lemma:2} again.

\item Alice replies $rr$;

If $2u+3=m$ then $br$ is an easy win for Bob. If $u=v$ then $BR$ is easily seen to be safe using \Lr{lemma:2}. If neither of the above is the case, then $rrr$ is safe as it allows Bob to create a single red stack on his next move.

\item Alice replies $rR$;

If $2u+3=m$, or  $2u+3=m+2$, or $2u+3=m-2$ (the latter two possibilities are dangerous if and only if $v=1$) then $br$ leads an easy win for Bob. Otherwise $rr$ wins.

\item Alice replies $RB$;

If the height $H$ of the new hill is $m{-}3$ or $m{-}2$ then $br$ is safe. If it is $m{-}1$ then $rr$ is safe. The other cases are easy.

\item Alice replies $BR$;

Bob easily wins by $rr$ by making sure that red survives.

\item Alice replies $bB$ or $bb$;

We have $k>3$ \obda, for otherwise we can switch colors. Thus in both cases, $bbB$ creates a position of the same form as the original one, and we proceed by induction.
\end{enumerate}
\end{proof}

Thus, it only remains to consider positions $S$ from which the even-hill-strategy would yield a position with exactly 2 singletons in one color and $S$  has not been preceeded by a position with exactly 3 singletons in that color, for we have already seen that such positions are won by Bob. Since Alice's last state was a target state, such an $S$ must have one of the following forms (or its red-blue reflection):

\renewcommand{\labelenumi}{\arabic{enumi}.}
\begin{enumerate}
\item $\langle 3,k;2u{+}1;2v \rangle$ with $k \ge 4$ (if Alice just played $rR$);
\item $\langle 2,k;2u,2;2v \rangle$ with $k \ge 4$ and even (if Alice just played $rr$).
\item $\langle 4,k;0;2v \rangle$ with $k \ge 4$ and even (if Alice just played $BR$). 
\end{enumerate}
 
These cases will be easily solved using the following lemma.

\begin{lemma} \label{lemma:3}
Any even state of the form $\langle 2,2s;2u,2v \rangle$ with $u,v \geq 1$ is safe unless $2u+2=m$ and $s>1$.
\end{lemma}
\begin{proof}
We analyze all possible moves of Alice from this state:
\renewcommand{\labelenumi}{(\roman{enumi})}
\begin{enumerate}
\item Alice plays $BR$;
(Provided $u=v$.) Bob plays $rr$ and can make sure that the $rr$ stack survives to the end by immediately obliterating any blue stack of size 2 Alice creates.

\item Alice plays $RB$;
If the height $2u+2v$ of the new stack $X$ is greater than $m$ or less than $m-2$ then Bob plays $rr$ and wins easily by \Lr{lemma:1} or \Lr{lemma:2}.
Otherwise we have to be slightly more careful to avoid a red stack of height $m$: if $2u+2v=m$ play $rX$, if $2u+2v=m-1$ then play $rr$, and if $2u+2v=m-2$ play $br$ and counter $rb$ with $brrb$.

\item Alice plays $bb$;
We may assume $s=1$ \obda. If $2v+2\neq m$ then $bbB$ wins.  If $2v+2= m$, in which case $u=v$ holds, we win by $BR$.  

\item Alice plays $bB$;
We may assume $s=1$ \obda. If $2v+2\neq m$ then $bbB$ as above.  If $2v+2= m$ then $rb$ wins.

\item Alice plays $br$;

If $2u= m-1$, then $u>1$ because we have at least 8 chips in total. It is now easy to see that $br$ wins. If $2u\neq m-1$ then $rR$ wins by \Lr{lemma:1} or  \Lr{lemma:2}.

\item Alice plays $rb$;

Bob easily wins by playing $rrb$, unless $2u=m-3$ in which case he can safely play $br$.

\item Alice plays $rR$;

Bob easily wins by playing $rrb$, unless $2u=m-2$ in which case he can safely play $br$.

\item Alice plays $rr$;

This is the only case in which, under certain circumstances, Bob can lose.  Indeed, if $2u+2=m$, then Alice threatens to combine all red chips in a stack of height $m$,
winning the game. The only way for Bob to defend is to capture the red hill of height $2u$ with the blue hill, and he can do so \iff\ $u=v$ which implies $s=1$. 

If $2u+2\neq m$ then playing $rrR$ reaches a safe position by \Lr{lemma:1} or \Lr{lemma:2}.

\end{enumerate}
\end{proof}

We can now analyze the positions 1-3 above.

For position 1 note that, by \Lr{lemma:3}, $rR$ is safe unless $2u+4=m$. If the latter is the case, then $br$ will easily allow Bob to create a single red stack
of height different than $m$, winning by Lemmas \ref{lemma:1} and \ref{lemma:2}.

Position 2 is very similar and we leave the details to the reader.

Finally, in position 3 Bob can play $rr$ and win by \Lr{lemma:3} since $m>4$.
\medskip

This completes the proof in all cases when $p\geq 6$, and so Bob can create a target state at his first move as already mentioned. It remains to consider the simple cases when $p=3,4$ or $5$.

When $p=5$, Bob has to deviate from his general strategy only slightly. If Alice opens with $bb$ or $rb$ then he replies with $rb$ and $bb$ respectively.
This creates a target state unless the number of blue chips $q$ is also 5. But in the latter case \Lr{lemma:3} applies, with blue being the minority color now.
If Alice opens with $br$ then $rr$ wins by  \Lr{lemma:3} again. Finally, if Alice opens with $rr$ then $rr$ will quickly allow Bob to form a single red stack, winning by \Lr{lemma:1} or \Lr{lemma:2}.

When $p=4$ then Bob can play his first move according to his 2-hill strategy for the general case unless $q=4$; this immediately reaches a position $\langle 2,2s;2;2 \rangle$,
which is a win by \Lr{lemma:3}. If $q=4$ then Bob can force the position $\langle 1,3;2,2; \rangle$ after the first move, and it is easy to win that position.

Finaly, the case $p=3$ is an easy excersise left for the entertainment of the reader.

\end{proof}


\begin{thebibliography}{10}

\bibitem{B} P. Beaudoin, Babylon, un jeu de Bruno Faidutti, \verb+http://lesplateaux.net/babylone+, 2004.

\bibitem{F} B. Faidutti, {\em Babylon}, \verb+http://www.faidutti.com/iindex.php?Module=mesjeux&id=323+, 2003.

\bibitem{G} M. Goadrich, Solving Babylon, {\em The Games Journal}, \verb+http://www.thegamesjournal.com+
\verb+/letters/Nov2004.shtml+, 2004.

\bibitem{GS} M. Goadrich and M. Schlatter, Analyzing Two-Color Babylon, {\em Integers} {\bf 11} (2011), \#G01.

\bibitem{M} C. L. Mota, Estudio Babylone, \verb+http://pseudolog.com/about/Estudio_Babylone.pdf+, 2006.

\end{thebibliography}
\end{document}